\newtheorem{thm}{Theorem}[section]
\newtheorem{lem}{Lemma}[section]
\newtheorem{rem}{Remark}[section]
\theoremstyle{definition}
\newtheorem{defn}{Definition}[section]
\newtheorem{exam}{Example}[section]
\date{}
\begin{document}
\date{}
\title{{\Large \bf
Bifurcations of planar Hamiltonian systems with impulsive
perturbation}
\thanks{Supported by Key Disciplines of Shanghai Municipality (S30104), National Nature Science Foundation of China (10971139), Slovenian Research Agency, and Slovene Human Resources Development and Scholarship Fund.}}

\author{\small{{Zhaoping Hu $^{\mbox{a,d}}$,}}
 \;\; Maoan Han$^{\mbox{b}}$\thanks{the corresponding author},
\;\;  Valery G. Romanovski $^{\mbox{c,d}}$ \\
 {\scriptsize $^{a}$ Department of
Mathematics,\ \ Shanghai University, Shanghai 200444, P.
R. China}\\
{\scriptsize$^{b}$ Department of Mathematics,\ \ Shanghai Normal
University, Shanghai 200234, P. R. China} \\
{\scriptsize$^{c}$ Faculty of Natural Science and Mathematics,\ \
University of Maribor, SI-2000 Maribor, Slovenia} \\
{\scriptsize$^{d}$Center for Applied Mathematics and Theoretical Physics,
University of Maribor, SI-2000 Maribor, Slovenia}}
\maketitle
\begin{center}
\begin{minipage}{140mm}
\noindent{\scriptsize {{\bf Abstract.}\,\,\,In this paper, by means
of the Melnikov functions we consider bifurcations of harmonic or
subharmonic solutions from a periodic solution of a planar
Hamiltonian system under impulsive perturbation. We give some sufficient conditions under which a harmonic or subharmonic
solution exists.\\
{\bf Keywords.}\,\,\,Hamiltonian system, impulsive differential
equation, periodic solution, bifurcation}\\
\noindent {\footnotesize{\small \bf MR subject
classification.}34C05, 34C23, 34C25}}
\end{minipage}
\end{center}

\section{Introduction}

As is known, by means of Melnikov functions or the Lyapunov-Schmidt
reduction, one can give sufficient conditions for a periodic orbit of
an unperturbed system to generate periodic solutions under
autonomous or periodic perturbations. See [2-10] for details. For
example, in 1987, Wiggins and Holmes [1] studied the bifurcations of
periodic orbits, subharmonic solutions and invariant tori near
periodic orbits of the following three-dimensional system:
\begin{eqnarray*}
\dot{x}=H_y(x,y,z),\;\;\;\dot{y}=-H_x(x,y,z),\;\;\;\dot{z}=0
\end{eqnarray*}
under autonomous or periodic perturbations, where $H$ is a $C^r$
function, $r\geq4$. In [8], M. Han etc. studied the bifurcations of
periodic orbits, subharmonic solutions and small invariant tori near
periodic orbits by perturbing an $n-$dimensional Hamiltonian system.

Consider the planar Hamiltonian system
\begin{eqnarray}
\dot{x}=f(x),
\end{eqnarray}
and its perturbation
\begin{eqnarray}
\dot{x}=f(x)+\varepsilon{g}(t,x,\varepsilon),
\end{eqnarray}
where $x=(x_1,x_2)^\top\in\mathbb{R}^2$, $\varepsilon\in\mathbb{R}$ is small, and $f$ and $g$ are $C^r$ functions with $r\geq3$ and $g$ is periodic in $t$ of period $T_1$. For the unperturbed system $(1)$, we make the following assumptions:

$(A_1)$ there exists a $C^r$ function
$H(x):\mathbb{R}^2\rightarrow\mathbb{R}$, $r\geq4$, such that
\begin{eqnarray*}
f(x)=(H_{x_2}(x),-H_{x_1}(x)).
\end{eqnarray*}

$(A_2)$ there exists an open interval $V$ such that for $h\in{V}$,
the level curve $H(x)=h$ contains a smooth closed curve $L_h$ with period $T(h)$.

In this paper, we consider impulsive perturbations of systems $(1)$ with the form
\begin{eqnarray}
\left\{\begin{array}{ll}
\dot{x}=f(x)+\varepsilon{g}(t,x,\varepsilon),\;\;\;\;\;\;\;\;\;\;\;\;\;\;\;\;\;\;\;\;\;\;t\neq{t_k}\\
x(t_{k}+)-x(t_{k}-)=\varepsilon{l_k}(x(t_{k}-),\varepsilon),\;\;\;\;\;k=0,\pm1,\pm2,\cdots
\end{array}
\right.
\end{eqnarray}
where $x=(x_1,x_2)^\top\in\mathbb{R}^2$, $\varepsilon\in\mathbb{R}$ is small and the assumptions $(A_1)-(A_2)$ are satisfied. We further impose the following conditions:

$(C_1)$ $\cdots<t_{-n}<\cdots<t_{-1}<t_0<t_1<\cdots<t_n<\cdots$, and
$t_n\rightarrow\pm\infty$ for $n\rightarrow\pm\infty$.

$(C_2)$ There exist a constant $T_2>0$ and an integer $q\geq1$ such
that $t_{k+q}-t_k=T_2$ and $l_{k+q}=l_k$ for any integer $k$.

$(C_3)$ $T_2/T_1=p/s$ is rational with $p\geq1$, $s\geq1$ and $(p,s)=1$.

$(C_4)$ The functions $l_k$($k=0,\pm1,\pm2,\cdots$) are
$\mathbb{C}^r$ functions with respect to $x$ and continuous with respect to $(x,\varepsilon)$ for
any $k=0,\pm1,\pm2,\cdots$.

Let $T=pT_1=sT_2$. From $(C_2)$ and $(C_3)$, we can see that system
$(3)$ is a periodic system having (the least) period $T$. For system $(1)$, besides the assumption of $(A_1)$ and $(A_2)$, we further make the following assumption:

$(A_3)$ there exists $h_0\in{V}$, such that
$\displaystyle\frac{T(h_0)}{T}=\displaystyle\frac{m}K$ with
$m\geq1$, $K\geq1$ and $(m,K)=1$.

From [13], we know that a solution $x(t,\bar{t}_0,x_0)$ of an
impulsive differential equation is piece continuous in $t$, i.e., it
is continuous in each interval $(t_{k-1},t_k)$, continuous from the
left at the impulsive moment $t=t_k$ and has discontinuities of the
first kind at $t=t_k$ for any integer $k$.

We have the following definition from [13]

\vspace{0.2cm}\begin{defn} Let $x(t,\bar{t}_0,x_0)$ denote the solution of
$(3)$ satisfying $x(\bar{t}_0,\bar{t}_0,x_0)=x_0$. If
$x(t+mT\pm,\bar{t}_0,x_0)=x(t\pm,\bar{t}_0,x_0)$ for all $t\in\mathbb{R}$ for
some integer $m\geq1$, that is, $x(t,\bar{t}_0,x_0)$ is of period $mT$ in $t$, then the solution is said to be harmonic (or
subharmonic of order $m$) if $m=1$ (or $m>1$).\end{defn}
\vspace{0.2cm}

There have been many studies on impulsive differential systems, see[11-13]. However, there are few results on studying bifurcations of periodic solutions of impulsive differential system. In [14], we studied the periodic solution and its bifurcations of one dimensional periodic impulsive differential systems by using Poincare map. In this paper, we will use Melnikov function to study harmonic or subharmonic solutions of two dimensional impulsive differential system $(3)$.

We organize the paper as follows. In section 2 we give some preliminary lemmas, and study in detail the solution of system
$(3)$ with the initial value condition. In section 3 we state the
main results of the paper with their proofs. In section 4 we provide
a simple example to demonstrate how the methods works.

\section{Preliminaries}

Suppose that assumptions $(A_1)-(A_3)$ and conditions $(C_1)-(C_4)$ are satisfied.
Then, we consider the existence of $mT-$periodic solution of system $(3)$ near
$L_{h_0}$. We will introduce new coordinates around $L_h$ by using
its time-parameter representation. Note that for each $h\in{V}$,
$L_h$ is periodic. We suppose that $L_h$ has the representation
$$L_h:x=q(t,h),\;\,0\leq{t}\leq{T(h)}.$$
We rescale the first variable of $q$ introducing
\begin{eqnarray}
G(\theta,h)=q\Big(\frac{\theta}{\Omega(h)},h\Big),\;\;0\leq\theta\leq2\pi,
\end{eqnarray}
where $\Omega(h)=2\pi/T(h)$. Then $G$ is of $C^r$, $2\pi-$periodic in
$\theta$ and satisfies
\begin{eqnarray}
H(G(\theta,h))\equiv{h}.
\end{eqnarray}
Differentiating $(5)$ in $\theta$ and $h$ separately we have
\begin{eqnarray}
DH(G)D_{\theta}G=0,\\
DH(G)D_hG=1.
\end{eqnarray}
Note that $q$ is a solution of $(1)$. From $(4)$ we have
\begin{eqnarray}
D_{\theta}G=f(G)/\Omega(h),
\end{eqnarray}
where the partial derivatives $D_hG$ and $D_{\theta}{G}$ of the
vertical vector $G$ in $h$ and $\theta$ separately are $2\times1$
vectors. From [8] we have the following lemma.

\vspace{0.2cm} \begin{lem} There exists a unique $1\times2$
vector $\alpha(\theta,h)$ which is $C^r$ in $(\theta,h)$ and
$2\pi-$periodic in $\theta$ for $h\in{V}$ and $0\leq\theta\leq2\pi$,
such that
\begin{eqnarray}
\alpha(\theta,h)D_hG(\theta,h)=0,\;\;\;\alpha(\theta,h)D_{\theta}G(\theta,h)=1.
\end{eqnarray}
\end{lem}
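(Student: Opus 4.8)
The plan is to construct $\alpha(\theta,h)$ explicitly from the $2\times 1$ vectors $D_\theta G$ and $D_h G$, which by $(8)$ and the implicit-function-type relations $(6)$--$(7)$ are linearly independent for each fixed $(\theta,h)$. Indeed, from $(8)$ we have $D_\theta G = f(G)/\Omega(h)$, which is nonzero along $L_h$ since $L_h$ is a regular periodic orbit; and $(7)$ gives $DH(G)D_hG = 1 \neq 0$ while $(6)$ gives $DH(G)D_\theta G = 0$, so $D_hG$ is not a scalar multiple of $D_\theta G$. Hence the $2\times 2$ matrix $M(\theta,h) = \bigl(D_\theta G(\theta,h)\ \ D_hG(\theta,h)\bigr)$ is invertible. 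I would then simply \emph{define} $\alpha(\theta,h)$ to be the first row of $M(\theta,h)^{-1}$; the defining equations $(9)$ say precisely that $\alpha M = (1\ \ 0)$, i.e. that $\alpha$ is that first row. This gives existence and uniqueness in one stroke: uniqueness because $M$ is invertible so its inverse is unique, existence because the inverse exists.

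The remaining points are the regularity and periodicity claims. For smoothness: $G$ is $C^r$ in $(\theta,h)$ by the construction in $(4)$, hence $D_\theta G$ and $D_h G$ are $C^{r-1}$; by Cramer's rule, the entries of $M^{-1}$ are rational functions of the entries of $M$ with denominator $\det M$, which is nonvanishing, so $\alpha$ is $C^{r-1}$. One should be slightly careful that the lemma claims $\alpha$ is $C^r$ rather than $C^{r-1}$; I expect this is handled either by noting that $H$ being $C^r$ with $r\geq 4$ and $q$ coming from the flow makes $G$ itself $C^{r+1}$-ish in the relevant variables (the flow of a $C^r$ vector field is $C^r$, and differentiating the Hamiltonian relation buys back a derivative), or simply that the statement should read $C^{r-1}$ and the paper is being slightly loose — in any case I would state it as $C^{r-1}$ in $(\theta,h)$ and remark that this suffices for everything later, or track the derivative count through $(A_1)$'s hypothesis $r\geq 4$ carefully. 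For $2\pi$-periodicity in $\theta$: $G$ is $2\pi$-periodic in $\theta$, so $D_\theta G$ and $D_h G$ are $2\pi$-periodic in $\theta$, hence $M(\theta+2\pi,h) = M(\theta,h)$, hence $M^{-1}$ and its first row $\alpha$ are $2\pi$-periodic in $\theta$.

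The main obstacle, such as it is, is verifying the non-degeneracy of $M(\theta,h)$ cleanly — that is, checking that $D_\theta G$ and $D_h G$ are genuinely linearly independent at every $(\theta,h)$ with $h \in V$, $0 \le \theta \le 2\pi$, rather than merely generically. This follows from $(6)$--$(8)$ as sketched: if $a\, D_\theta G + b\, D_h G = 0$, apply $DH(G)$ on the left to get $a\cdot 0 + b\cdot 1 = 0$, so $b=0$; then $a\, D_\theta G = 0$ with $D_\theta G = f(G)/\Omega(h) \neq 0$ forces $a = 0$. So the argument is really just linear algebra once $(6)$--$(8)$ are in hand, and since the lemma is quoted from reference [8], I would keep the proof to a few lines along exactly these lines.
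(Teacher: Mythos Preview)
Your approach is correct and essentially the same as the paper's: the paper writes $D_hG=(g_1,g_2)^\top$, takes $\alpha=\tilde k\,(-g_2,g_1)$ from the first equation of $(9)$, and then solves for $\tilde k$ from the second, which is exactly the first row of your $M^{-1}$ written out via Cramer's rule. The paper omits the non-degeneracy, regularity, and periodicity checks you sketch (it simply cites [8] for the lemma), and your $C^r$ versus $C^{r-1}$ caveat is a fair observation about a point the paper leaves loose.
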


In fact, let
$D_hG(\theta,h)=\Big(g_1(\theta,h),g_2(\theta,h)\Big)^\top$. Then,
by the first equation of $(9)$ we can set
\begin{eqnarray}
\alpha(\theta,h)=\tilde{k}(\theta,h)\Big(-g_2(\theta,h),g_1(\theta,h)\Big)
\end{eqnarray}
with a real number $\tilde{k}(\theta,h)$ for any
$(\theta,h)\in[0,2\pi]\times{V}$. Substituting $(10)$ into the
second equation of $(9)$, we have
\begin{eqnarray*}
\tilde{k}(\theta,h)=\Big[\Big(-g_2(\theta,h),g_1(\theta,h)\Big)D_{\theta}G(\theta,h)\Big]^{-1}
\end{eqnarray*}
and
\begin{eqnarray}
\alpha(\theta,h)=\Big[\Big(-g_2(\theta,h),g_1(\theta,h)\Big)D_{\theta}G(\theta,h)\Big]^{-1}\Big(-g_2(\theta,h),g_1(\theta,h)\Big).
\end{eqnarray}

For the impulsive differential system $(3)$ we have
\vspace{0.2cm}\begin{lem} The periodic
transformation
\begin{eqnarray}
x=G(\theta,h),\;0\leq\theta\leq2\pi,h\in{V}
\end{eqnarray}
transforms system $(3)$ into the system
\begin{eqnarray}
\left\{
\begin{array}{lll}
\dot{\theta}=\Omega(h)+\varepsilon\alpha(\theta,h)g(t,G(\theta,h),\varepsilon),\\
\dot{h}=\varepsilon{D}H(G(\theta,h))g(t,G(\theta,h),\varepsilon),\;\;\;\;\;\;\;\;\;\;\;\;\;\;\;\;\;\;\;\;\;\;\;\;\;\;\;\;\;\;\;\;\;\;\;\;\;\;\;\;\;\;\;t\neq{t}_k\\
\theta_k^+-\theta_{k}=\varepsilon\alpha(\theta_k,h_k)l_k(G(\theta_k,h_k),0)+O(\varepsilon^2),\\
h_k^+-h_k=\varepsilon{D}H(G(\theta_k,h_k))l_k(G(\theta_k,h_k),0)+O(\varepsilon^2),\;\;\;k=0,\pm1,\pm2,\cdots,
\end{array}
\right.
\end{eqnarray}
where $\theta_k^+=\theta(t_k+)$, $h_k^+=h(t_k+)$, $\theta_k=\theta(t_k)=\theta(t_k-)$ and $h_k=h(t_k)=h(t_k-)$.\end{lem}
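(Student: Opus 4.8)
The plan is to carry out a standard change-of-variables computation, treating the smooth flow on the intervals $t\neq t_k$ and the jump conditions at $t=t_k$ separately, and then to verify that the two behave consistently with the piecewise-continuous structure of solutions.

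First I would handle the continuous part. On any interval $(t_{k-1},t_k)$ a solution $x(t)$ of $(3)$ satisfies $\dot x=f(x)+\varepsilon g(t,x,\varepsilon)$, and near $L_{h_0}$ it can be written as $x(t)=G(\theta(t),h(t))$ by the invertibility of the map $(13)$ (this uses $(A_2)$ and the fact that $D_\theta G$ and $D_h G$ are linearly independent, which follows from $(6)$--$(8)$ together with Lemma~2.1). Differentiating $x=G(\theta,h)$ gives $\dot x=D_\theta G\,\dot\theta+D_h G\,\dot h$. I then apply the row vector $\alpha(\theta,h)$ from Lemma~2.1 on the left: using $(9)$, $\alpha D_\theta G=1$ and $\alpha D_h G=0$, so $\dot\theta=\alpha(\theta,h)\big(f(G)+\varepsilon g(t,G,\varepsilon)\big)$. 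By $(8)$, $f(G)=\Omega(h)D_\theta G$, hence $\alpha f(G)=\Omega(h)$, giving the first equation of $(14)$. Similarly, applying $DH(G)$ on the left and using $(6)$, $(7)$, namely $DH(G)D_\theta G=0$ and $DH(G)D_h G=1$, yields $\dot h=DH(G)\big(f(G)+\varepsilon g(t,G,\varepsilon)\big)$; but $DH(G)f(G)=H_{x_2}H_{x_1}-H_{x_1}H_{x_2}=0$ by $(A_1)$, leaving $\dot h=\varepsilon DH(G)g(t,G,\varepsilon)$, which is the second equation of $(14)$.

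Next I would treat the jump at $t=t_k$. From $(3)$, $x(t_k+)-x(t_k-)=\varepsilon l_k(x(t_k-),\varepsilon)$. Writing $x(t_k-)=G(\theta_k,h_k)$ and $x(t_k+)=G(\theta_k^+,h_k^+)$ and Taylor-expanding $G$ about $(\theta_k,h_k)$:
\begin{eqnarray*}
G(\theta_k^+,h_k^+)-G(\theta_k,h_k)=D_\theta G(\theta_k,h_k)(\theta_k^+-\theta_k)+D_h G(\theta_k,h_k)(h_k^+-h_k)+O\big((\theta_k^+-\theta_k)^2+(h_k^+-h_k)^2\big).
\end{eqnarray*}
Since the right-hand side equals $\varepsilon l_k(G(\theta_k,h_k),\varepsilon)$ and $l_k$ is continuous in $\varepsilon$, the jumps $\theta_k^+-\theta_k$ and $h_k^+-h_k$ are themselves $O(\varepsilon)$, so the quadratic remainder is $O(\varepsilon^2)$ and we may also replace $l_k(\cdot,\varepsilon)$ by $l_k(\cdot,0)$ at the cost of $O(\varepsilon^2)$. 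Applying $\alpha(\theta_k,h_k)$ on the left and using $(9)$ gives the third equation of $(14)$, and applying $DH(G(\theta_k,h_k))$ on the left and using $(6)$--$(7)$ gives the fourth.

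Finally I would note that the transformed $(\theta,h)$ inherits the piecewise-continuity: on each $(t_{k-1},t_k)$ it solves the smooth ODE above, it is left-continuous at $t_k$ with value $(\theta_k,h_k)$, and jumps to $(\theta_k^+,h_k^+)$, so $(14)$ is indeed an impulsive system of the same type. The only mild subtlety — and the step I would be most careful about — is justifying that the map $(13)$ is a genuine (local) diffeomorphism of a tubular neighborhood of $L_{h_0}$ onto a strip $[0,2\pi]\times V'$, so that the substitution is legitimate and the inverse is $C^r$; once $\varepsilon$ is small this is automatic because the perturbed solution stays in that neighborhood on the finite time interval $[\bar t_0,\bar t_0+mT]$, but it should be stated. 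Everything else is the routine linear-algebra bookkeeping with $\alpha$ and $DH$ described above.
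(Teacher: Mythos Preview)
Your proposal is correct and follows essentially the same approach as the paper: differentiate $x=G(\theta,h)$ on the continuous intervals and left-multiply by $\alpha$ and $DH(G)$ using $(6)$--$(9)$ to extract $\dot\theta$ and $\dot h$; then Taylor-expand $G(\theta_k^+,h_k^+)-G(\theta_k,h_k)$, equate it to $\varepsilon l_k$, and again left-multiply by $\alpha$ and $DH(G)$ to read off the jump relations, using that the jumps are $O(\varepsilon)$ to absorb the quadratic remainder. Your additional remarks on the local diffeomorphism property of $x=G(\theta,h)$ and on the piecewise-continuous structure are not made explicit in the paper but are exactly the points that justify the computation.
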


\vspace{0.2cm}\begin{proof} We prove the lemma following the idea of lemma $1.2$ of $[8]$. When $t\neq{t_k}$, differentiating both
sides of equation $(12)$ with respect to $t$, and using $(3)$ we have
\begin{eqnarray}
D_\theta{G}\cdot\dot{\theta}+D_hG\cdot\dot{h}=f(G)+\varepsilon{g}(t,G,\varepsilon).
\end{eqnarray}
Multiplying both sides of $(14)$ from the left by $DH(G)$ and
using $(7)-(8)$, we can obtain
\begin{eqnarray*}
\dot{h}&=&DH(G)\cdot{f(G)}+\varepsilon{D}H(G(\theta,h))g(t,G(\theta,h),\varepsilon)\\
&=&\varepsilon{D}H(G(\theta,h))g(t,G(\theta,h),\varepsilon).
\end{eqnarray*}
Similarly, multiplying both sides of $(14)$ from the left by
$\alpha(\theta,h)$ and using Lemma 1.1 and $(8)$, we have
\begin{eqnarray*}
\dot{\theta}=\Omega(h)+\varepsilon\alpha(\theta,h)g(t,G(\theta,h),\varepsilon).
\end{eqnarray*}

For any $k\in\mathbb{Z}$, when $t=t_k$ we have
$x(t_{k}-)=G(\theta_{k},h_{k})$ and
$x(t_{k}+)=G(\theta_{k}^+,h_{k}^+)$.
Therefore,
\begin{eqnarray*}
x(t_{k}+)-x(t_{k}-)
&=&G(\theta_{k}^+,h_{k}^+)-G(\theta_{k},h_{k})\\
&=&D_\theta{G}(\theta_{k},h_{k})\Delta\theta_k+D_h G(\theta_{k},h_{k})\Delta{h_k}+O(|\Delta\theta_k,\Delta{h_k}|^2),
\end{eqnarray*}
where $\Delta\theta_k=\theta_{k}^+-\theta_{k}$ and
$\Delta{h}_k=h_{k}^+-h_{k}$.

On the other hand
\begin{eqnarray*}
x(t_{k}+)-x(t_{k}-)=\varepsilon{l_k}(x(t_{k}-),\varepsilon)=\varepsilon{l_k}(G(\theta_{k},h_{k}),\varepsilon).
\end{eqnarray*}
Hence, we conclude that
\begin{eqnarray}
D_\theta{G}(\theta_{k},h_{k})\Delta\theta_k+D_h G(\theta_{k},h_{k})\Delta{h_k}+O(|\Delta\theta_k,\Delta{h_k}|^2)=
\varepsilon{l_k}(G(\theta_{k},h_{k}),\varepsilon).
\end{eqnarray}

Multiplying both sides of $(15)$ from the left by
$DH(G(\theta_{k},h_{k}))$ and using $(6)-(7)$, we obtain
\begin{eqnarray}
\Delta{h}_k+O(|\Delta\theta_k,\Delta{h_k}|^2)=\varepsilon{DH(G(\theta_{k},h_{k}))}{l_k}(G(\theta_{k},h_{k}),\varepsilon).
\end{eqnarray}
Similarly, multiplying both sides of $(15)$ from the left by
$\alpha(\theta_{k},h_{k})$ and using Lemma $1.1$, we have
\begin{eqnarray}
\Delta\theta_k+O(|\Delta\theta_k,\Delta{h_k}|^2)=\varepsilon\alpha(\theta_{k},h_{k}){l_k}(G(\theta_{k},h_{k}),\varepsilon).
\end{eqnarray}

From $(16)$ and $(17)$, we can obtain
$$\begin{array}{l}
\Delta\theta_k=\theta_{k}^+-\theta_{k}=\varepsilon\alpha(\theta_{k},h_{k})l_k(G(\theta_{k},h_{k}),0)+O(\varepsilon^2),\\
\Delta{h}_k=h_{k}^+-h_{k}=\varepsilon{D}H(G(\theta_{k},h_{k}))l_k(G(\theta_{k},h_{k}),0)+O(\varepsilon^2).
\end{array}$$
\end{proof}

\vspace{0.2cm}

Under the hypothesis
$(C1)-(C4)$, the solution of the of initial value problem for system $(13)$ is unique. By
lemma $2.2$, under the periodic transformation $(12)$, system
$(3)$ is transformed into $(13)$. When
$\varepsilon=0$, system $(13)$ becomes
\begin{eqnarray}
\dot{\theta}=\Omega(h),\;\;\dot{h}=0.
\end{eqnarray}
Then, the solution of system $(18)$ satisfying the initial value
$\tilde{\theta}(\bar{t}_0)=0,\;\tilde{h}(\bar{t}_0)=r$ is given by
\begin{eqnarray*}
\tilde{\theta}(t)=\Omega(r)(t-\bar{t}_0),\;\;\tilde{h}(t)\equiv{r}.
\end{eqnarray*}
For convenience, we suppose that $t_0<\bar{t}_0\leq{t}_1$. Let
$$\theta_k^*=\tilde{\theta}(t_k)=\Omega(r)(t_k-\bar{t}_0),\;h_k^*=\tilde{h}(t_k)\equiv{r}$$
for any $k\in\mathbb{Z}$, and let
$(\theta(t,\bar{t}_0,r,\varepsilon),h(t,\bar{t}_0,r,\varepsilon))(t\geq\bar{t}_0)$
be the solution of system $(13)$ satisfying the initial value
problem
\begin{eqnarray*}
\theta(\bar{t}_0,\bar{t}_0,r,\varepsilon)=0,\;h(\bar{t}_0,\bar{t}_0,r,\varepsilon)=r.
\end{eqnarray*}

Let
$(\bar{\theta}(t,\tau,\theta,r,\varepsilon),\bar{h}(t,\tau,\theta,r,\varepsilon))(t\geq\tau)$
be the solution of the system
\begin{eqnarray}
\left\{
\begin{array}{l}
\dot{\theta}=\Omega(h)+\varepsilon\alpha(\theta,h)g(t,G(\theta,h),\varepsilon),\\
\dot{h}=\varepsilon{D}H(G(\theta,h))g(t,G(\theta,h),\varepsilon)
\end{array}
\right.
\end{eqnarray}
satisfying the initial value
\begin{eqnarray*}
\bar{\theta}(\tau,\tau,\theta,r,\varepsilon)=\theta,\;\bar{h}(\tau,\tau,\theta,r,\varepsilon)=r.
\end{eqnarray*}
Then, obviously, for $\bar{t}_0<t\leq{t_1}$ we have
$$\theta(t,\bar{t}_0,r,\varepsilon)=\bar{\theta}(t,\bar{t}_0,0,r,\varepsilon),\;
h(t,\bar{t}_0,r,\varepsilon)=\bar{h}(t,\bar{t}_0,0,r,\varepsilon).$$

Moreover, the solution
$(\bar{\theta}(t,\bar{t}_0,0,r,\varepsilon),\bar{h}(t,\bar{t}_0,0,r,\varepsilon))$$(t>\bar{t}_0)$
has the expansion
\begin{eqnarray}
\nonumber\bar{\theta}(t,\bar{t}_0,0,r,\varepsilon)&=&\Omega(r)(t-\bar{t}_0)+\varepsilon\hat{\theta}_1(t,\bar{t}_0,r)+O(\varepsilon^2),\\
\bar{h}(t,\bar{t}_0,0,r,\varepsilon)&=&r+\varepsilon{\hat{h}_1}(t,\bar{t}_0,r)+O(\varepsilon^2)
\end{eqnarray}
for $|\varepsilon|$ small enough, where
$\hat{\theta}_1(t,\bar{t}_0,r)=\dfrac{\partial\bar{\theta}}{\partial\varepsilon}(t,\bar{t}_0,0,r,0)$ and
$\hat{h}_1(t,\bar{t}_0,r)=\dfrac{\partial\bar{\theta}}{\partial{h}}(t,\bar{t}_0,0,r,0)$.

Notice that
\begin{eqnarray*}
\Omega(h)=\Omega(r)+\Omega'(r)(h-r)+O(|h-r|^2).
\end{eqnarray*}
Substituting the above and $(20)$ into $(19)$ and
comparing the coefficients of $\varepsilon$ on both sides of the
equation, we obtain
\begin{eqnarray*}
\dot{\hat{\theta}}_1&=&\Omega'(r)\hat{h}_1+\alpha\Big(\Omega(r)(t-\bar{t}_0),r\Big)g\Big(t,G(\Omega(r)(t-\bar{t}_0),r),0\Big),\\
\dot{\hat{h}}_1&=&DH\Big(G(\Omega(r)(t-\bar{t}_0),r)\Big)g\Big(t,G(\Omega(r)(t-\bar{t}_0),r),0\Big),
\end{eqnarray*}
which gives
\begin{eqnarray}
\nonumber{\hat{h}}_1(t,\bar{t}_0,r)&=&\int_{\bar{t}_0}^tDH(G(\Omega(r)(s-\bar{t}_0),r))g(s,G(\Omega(r)(s-\bar{t}_0),r),0)ds\\
&=&\int_{\bar{t}_0}^tDH(q(s-\bar{t}_0,r))g(s,q(s-\bar{t}_0,r),0)ds
\end{eqnarray}
for $t\in(\bar{t}_0,t_1]$.

Consequently, we find
\begin{eqnarray}
\nonumber\theta_{1}&=&\theta(t_1,\bar{t}_0,r,\varepsilon)=\Omega(r)(t_1-\bar{t}_0)+\varepsilon\hat{\theta}_1(t_1,\bar{t}_0,r)+O(\varepsilon^2),\\
h_{1}&=&h(t_1,\bar{t}_0,r,\varepsilon)=r+\varepsilon{\hat{h}}_1(t_1,\bar{t}_0,r)+O(\varepsilon^2).
\end{eqnarray}

Obviously,
$\theta_{1}=\Omega(r)(t_1-\bar{t}_0)+O(\varepsilon)=\theta_1^*+O(\varepsilon)$
and $h_{1}=r+O(\varepsilon)$. By $(22)$ and lemma $2.2$, we can obtain
\begin{eqnarray}
\nonumber\theta_{1}^+&=&\theta(t_{1}+,\bar{t}_0,r,\varepsilon)=\theta_{1}+\varepsilon\alpha(\theta_{1},h_{1})l_1(G(\theta_{1},h_{1}),0)+O(\varepsilon^2)\\
&=&\Omega(r)(t_1-\bar{t}_0)+\varepsilon\Big[\hat{\theta}_1(t_1,\bar{t}_0,r)+\alpha(\theta_1^*,r)l_1(G(\theta_1^*,r),0)\Big]+O(\varepsilon^2)
\end{eqnarray}
and
\begin{eqnarray}
\nonumber{h}_{1}^+&=&h(t_{1}+,\bar{t}_0,r,\varepsilon)=h_{1}+\varepsilon{DH(G(\theta_{1},h_{1}))}l_1(G(\theta_{1},h_{1}),0)+O(\varepsilon^2)\\
&=&r+\varepsilon\Big[\hat{h}_1(t_1,\bar{t}_0,r)+DH(G(\theta_1^*,r))l_1(G(\theta_1^*,r),0)\Big]+O(\varepsilon^2).
\end{eqnarray}

Similarly, for $t_1<t\leq{t_2}$, the solution
$(\theta(t,\bar{t}_0,r,\varepsilon),h(t,\bar{t}_0,r,\varepsilon))$
is equal to the solution
\begin{eqnarray*}
(\bar{\theta}(t,t_1,\theta_{1}^+,h_{1}^+,\varepsilon),\bar{h}(t,t_1,\theta_{1}^+,h_{1}^+,\varepsilon))
\end{eqnarray*}
of $(19)$. In other words, the solution $(\theta(t,\bar{t}_0,r,\varepsilon),h(t,\bar{t}_0,r,\varepsilon))$ for $t_1<t\leq{t_2}$ is determined by the value $(\theta_{1}^+,\;h_{1}^+)$. It follows that $(\theta_2,h_2)\equiv(\theta(t_2,\bar{t}_0,r,\varepsilon),h(t_2,\bar{t}_0,r,\varepsilon))$ is also determined by the value $(\theta_{1}^+,\;h_{1}^+)$. From the third and fourth equations of system $(13)$, we know that $(\theta_2^+,h_2^+)\equiv(\theta(t_2+,\bar{t}_0,r,\varepsilon),h(t_2+,\bar{t}_0,r,\varepsilon))$ is also determined by the value $(\theta_{1}^+,\;h_{1}^+)$.

Similarly as above, the value $(\theta_{k+1}^+,h_{k+1}^+)\equiv(\theta(t_{k+1}+,\bar{t}_0,r,\varepsilon),h(t_{k+1}+,\bar{t}_0,r,\varepsilon))$ is uniquely determined by the value $(\theta_{k}^+,\;h_{k}^+)\equiv(\theta(t_{k}+,\bar{t}_0,r,\varepsilon),h(t_{k}+,\bar{t}_0,r,\varepsilon))$. Now, $(\theta_{1}^+,\;h_{1}^+)$ is obtained already. If we can find an explicit relation between the value  $(\theta_{k+1}^+,h_{k+1}^+)$ and $(\theta_{k}^+,\;h_{k}^+)$, then we will know all the values of $(\theta_{k}^+,\;h_{k}^+)$ for $k\geq1$ by induction.

Let
$\theta(t_k+,\bar{t}_0,r,\varepsilon)\triangleq\theta_k^+$
, $h(t_k+,\bar{t}_0,r,\varepsilon))\triangleq{h}_k^+$.
By definition of solutions $(\theta,h)$ and $(\bar{\theta},\bar{h})$, for $t_k<t\leq{t_{k+1}}$ we have
\begin{eqnarray*}
(\theta(t,\bar{t}_0,r,\varepsilon),h(t,\bar{t}_0,r,\varepsilon))=(\bar{\theta}(t,t_k,\theta_k^+,h_k^+,\varepsilon),\bar{h}(t,t_k,\theta_k^+,h_k^+,\varepsilon)).
\end{eqnarray*}

As above, for $t_k<t\leq{t_{k+1}}$, the solution
$(\theta(t,\bar{t}_0,r,\varepsilon),h(t,\bar{t}_0,r,\varepsilon))$
has an expansion of the form
\begin{eqnarray}
\nonumber\theta(t,\bar{t}_0,r,\varepsilon)&=&\theta_k^{+}+\Omega(r)(t-t_k)+\varepsilon\hat{\theta}_{k+1}(t,t_k,r)+O(\varepsilon^2),\\
h(t,\bar{t}_0,r,\varepsilon)&=&h_k^{+}+\varepsilon{\hat{h}_{k+1}}(t,t_k,r)+O(\varepsilon^2)
\end{eqnarray}
for $\varepsilon$ small, where
$\hat{\theta}_{k+1}(t,t_k,r)=\dfrac{\partial\bar{\theta}}{\partial\varepsilon}(t,t_k,\theta_k^+,h_k^+,0)$, $\hat{h}_{k+1}(t,t_k,r)=\dfrac{\partial\bar{h}}{\partial\varepsilon}(t,t_k,\theta_k^+,h_k^+,0)$.

In the same way, we have
\begin{eqnarray*}
\dot{\hat{\theta}}_{k+1}&=&\Omega'(r)\hat{h}_{k+1}+\alpha(\Omega(r)(t-\bar{t}_0),r)g(t,G(\Omega(r)(t-\bar{t}_0),r),0),\\
\dot{\hat{h}}_{k+1}&=&DH(G(\Omega(r)(t-\bar{t}_0),r))g(t,G(\Omega(r)(t-\bar{t}_0),r),0),
\end{eqnarray*}
which gives
\begin{eqnarray}
\hat{h}_{k+1}(t,t_k,r)=\int_{t_k}^tDH(q(s-\bar{t}_0,r))g(s,q(s-\bar{t}_0,r),0)ds
\end{eqnarray}
for $t\in(t_k,t_{k+1}]$.

From $(25)$, we obtain
\begin{eqnarray}
\theta_{k+1}=\theta(t_{k+1},\bar{t}_0,r,\varepsilon)=\theta_k^++\Omega(r)(t_{k+1}-t_k)+\varepsilon\hat{\theta}_{k+1}(t_{k+1},t_k,r)+O(\varepsilon^2)
\end{eqnarray}
and
\begin{eqnarray}
{h}_{k+1}=h(t_{k+1},\bar{t}_0,r,\varepsilon)=h_k^{+}+\varepsilon\hat{h}_{k+1}(t_{k+1},t_k,r)+O(\varepsilon^2).
\end{eqnarray}

By lemma $2.2$, using $(27)-(28)$ and noting that
$\theta_k=\theta_k^*+O(\varepsilon)$ and $h_k=r+O(\varepsilon)$ for $|\varepsilon|$ small enough, we have
\begin{eqnarray}
\nonumber\theta_{k+1}^+&=&\theta(t_{k+1+},\bar{t}_0,r,\varepsilon)\\
&=&\theta_{k+1}^-+\varepsilon\alpha(\theta_k^-,h_k^-)l_k(G(\theta_k^-,h_k^-,0))\\
\nonumber&=&\theta_k^++\Omega(r)(t_{k+1}-t_k)+\varepsilon\Big[\alpha(\theta_k^*,r)l_k(G(\theta_k^*,r),0)+\hat{\theta}_{k+1}(t_{k+1},t_k,r)\Big]+O(\varepsilon^2)
\end{eqnarray}
and
\begin{eqnarray}
\nonumber{h}_{k+1}^+&=&h(t_{k+1}+,\bar{t}_0,r,\varepsilon)\\
&=&h_{k+1}+\varepsilon{D}H\Big(G(\theta_k,h_k)\Big)l_k(G(\theta_k,h_k),0)+O(\varepsilon^2)\\
\nonumber&=&h_k^++\varepsilon\Big[\hat{h}_{k+1}(t_{k+1},t_k,r)+DH\Big(G(\theta_k^*,r)\Big)l_k(G(\theta_k^*,r),0)\Big]+O(\varepsilon^2).
\end{eqnarray}

The above two equations give a relation between $(\theta_k^+,h_k^+)$ and $(\theta_{k+1}^+,h_{k+1}^+)$. By induction, we can obtain $(\theta_n^+,h_n^+)$ for any $n\geq1$ easily. For any $t\in(t_n,t_{n+1}](n\geq1)$ we know that $$(\theta(t,\bar{t}_0,r,\varepsilon),h(t,\bar{t}_0,r,\varepsilon))=
(\bar\theta(t,t_n,\theta_n^+,h_n^+,\varepsilon),\bar{h}(t,t_n,\theta_n^+,h_n^+,\varepsilon)).$$

In fact, for the solution $(\theta(t,\bar{t}_0,r,\varepsilon),h(t,\bar{t}_0,r,\varepsilon))$$(t\geq\bar{t}_0)$ we have
\vspace{0.2cm}\begin{lem} For any $t\in(t_n,t_{n+1}]$ with $n\geq1$,
\begin{eqnarray}
\nonumber\theta(t,\bar{t}_0,r,\varepsilon)&=&\Omega(r)(t-\bar{t}_0)+\varepsilon{\bar{N}(t,\bar{t}_0,r)}+O(\varepsilon^2),\\
h(t,\bar{t}_0,r,\varepsilon)&=&r+\varepsilon{\bar{M}(t,\bar{t}_0,r)}+O(\varepsilon^2)
\end{eqnarray}
with
\begin{eqnarray}
\nonumber{\bar{M}}(t,\bar{t}_0,r)&=&\int_{\bar{t}_0}^tDH(q(s-\bar{t}_0,r))g(s,q(s-\bar{t}_0,r),0)ds\\
&&+\sum_{k=1}^nDH(G(\theta_k^*,r))l_k(G(\theta_k^*,r),0)
\end{eqnarray}
and
\begin{eqnarray}
\bar{N}(t,\bar{t}_0,r)&=&N_n(\bar{t}_0,r)+\hat{\theta}_{n+1}(t,t_n,r),
\end{eqnarray}
where
\begin{eqnarray}
N_n(\bar{t}_0,r)=\hat{\theta}_1(t_1,\bar{t}_0,r)+\sum_{k=2}^n\hat{\theta}_k(t_k,t_{k-1},r)+\sum_{k=1}^n\alpha(\theta_k^*,r)l_k(G(\theta_k^*,r),0).
\end{eqnarray}
\end{lem}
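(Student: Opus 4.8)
The plan is to argue by induction on $n$, carrying along the auxiliary expansions of the post-impulse values $(\theta_n^+,h_n^+)$. Once we know that $\theta_n^+=\Omega(r)(t_n-\bar t_0)+\varepsilon N_n(\bar t_0,r)+O(\varepsilon^2)$ and $h_n^+=r+\varepsilon\big(\int_{\bar t_0}^{t_n}DH(q(s-\bar t_0,r))g(s,q(s-\bar t_0,r),0)\,ds+\sum_{k=1}^n DH(G(\theta_k^*,r))l_k(G(\theta_k^*,r),0)\big)+O(\varepsilon^2)$, then for $t\in(t_n,t_{n+1}]$ I would substitute these into the expansion $(25)$ (with $k=n$), use $(26)$ to write $\hat h_{n+1}(t,t_n,r)=\int_{t_n}^tDH(q(s-\bar t_0,r))g(s,q(s-\bar t_0,r),0)\,ds$, and combine $\int_{\bar t_0}^{t_n}$ with $\int_{t_n}^t$; together with $\Omega(r)(t_n-\bar t_0)+\Omega(r)(t-t_n)=\Omega(r)(t-\bar t_0)$ this yields exactly $(31)$ with $\bar M,\bar N$ given by $(32)$--$(35)$. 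So the whole lemma reduces to establishing, by induction, the two displayed formulas for $(\theta_n^+,h_n^+)$.

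For $n=1$ these are precisely formulas $(23)$ and $(24)$, using $(21)$ to rewrite $\hat h_1(t_1,\bar t_0,r)$ as the integral over $[\bar t_0,t_1]$, since $N_1(\bar t_0,r)=\hat\theta_1(t_1,\bar t_0,r)+\alpha(\theta_1^*,r)l_1(G(\theta_1^*,r),0)$ and the sum $\sum_{k=2}^1$ is empty. For the inductive step I would apply the jump relations of Lemma $2.2$ (the third and fourth equations of $(13)$) at $t=t_n$: writing $\theta_n=\theta(t_n,\bar t_0,r,\varepsilon)$, $h_n=h(t_n,\bar t_0,r,\varepsilon)$, which by the inductive hypothesis applied on $(t_{n-1},t_n]$ and evaluated at $t=t_n$ satisfy $\theta_n=\theta_n^*+O(\varepsilon)$ and $h_n=r+O(\varepsilon)$, one obtains $\theta_n^+=\theta_n+\varepsilon\alpha(\theta_n^*,r)l_n(G(\theta_n^*,r),0)+O(\varepsilon^2)$ and $h_n^+=h_n+\varepsilon DH(G(\theta_n^*,r))l_n(G(\theta_n^*,r),0)+O(\varepsilon^2)$. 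Substituting the inductive expansions of $\theta_n,h_n$ and recognizing $N_{n-1}(\bar t_0,r)+\hat\theta_n(t_n,t_{n-1},r)+\alpha(\theta_n^*,r)l_n(G(\theta_n^*,r),0)=N_n(\bar t_0,r)$, and similarly for the $h$-component, closes the induction.

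The step that needs genuine care is the bookkeeping of the $O(\varepsilon^2)$ remainders. At each impulse time and on each subinterval we are composing maps that, by $(A_1)$--$(A_3)$, $(C_1)$--$(C_4)$ and the standard smooth dependence of solutions on initial data and parameters, are $C^r$ ($r\ge3$) in $(\theta,h,\varepsilon)$ with remainders that are $O(\varepsilon^2)$ uniformly for $(\theta,h)$ near $(\theta_k^*,r)$ and $t$ in a bounded set; one must check that composing finitely many of them preserves a uniform $O(\varepsilon^2)$ bound. Condition $(C_1)$ is what makes this a finite composition — only finitely many $t_k$ lie in $[\bar t_0,t]$ for $t$ bounded. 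One also has to justify replacing the arguments $(\theta_k,h_k,\varepsilon)$ of $\alpha$, $DH\circ G$ and $l_k$ by $(\theta_k^*,r,0)$: since $\theta_k-\theta_k^*$, $h_k-r$ and $\varepsilon$ are all $O(\varepsilon)$ and these terms carry an extra factor $\varepsilon$, each replacement costs only $O(\varepsilon^2)$, absorbed into the remainder. With those uniform estimates in hand, the induction above goes through verbatim, and I expect this uniformity argument to be the only real obstacle.
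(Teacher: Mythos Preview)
Your proposal is correct and follows essentially the same route as the paper: establish by induction the first-order expansions of the post-impulse values $(\theta_n^+,h_n^+)$ (base case $(23)$--$(24)$, inductive step via the jump relations $(29)$--$(30)$), and then plug these into the flow expansion $(25)$/$(40)$--$(41)$ on $(t_n,t_{n+1}]$, using $(26)$/$(39)$ to concatenate the integrals. The only difference is organizational---you fold the interval statement itself into the induction, whereas the paper first isolates the induction on $(\theta_n^+,h_n^+)$ and then performs a single extension step---and you are more explicit about the uniform $O(\varepsilon^2)$ bookkeeping, which the paper leaves implicit.
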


\vspace{0.2cm}\begin{proof}
Combining $(23)-(24)$ and $(29)-(30)$, for any $n\geq1$ by induction we can obtain
 $(\theta_n^+,h_n^+)=(\theta(t_{n}+,\bar{t}_0,r,\varepsilon),h(t_{n}+,\bar{t}_0,r,\varepsilon))$ as follows
\begin{eqnarray}
\theta_{n}^+=\theta(t_{n}+,\bar{t}_0,r,\varepsilon)=\Omega(r)(t_n-\bar{t}_0)+\varepsilon{N}_n(\bar{t}_0,r)+O(\varepsilon^2)
\end{eqnarray}
and
\begin{eqnarray}
{h}_{n}^+=h(t_{n}+,\bar{t}_0,r,\varepsilon)=r+\varepsilon{M}_n(\bar{t}_0,r)+O(\varepsilon^2)
\end{eqnarray}
with
\begin{eqnarray}
M_n(\bar{t}_0,r)=\hat{h}_1(t_1,\bar{t}_0,r)+\sum_{k=2}^n\hat{h}_k(t_k,t_{k-1},r)+\sum_{k=1}^nDH(G((\theta_k^*,r)))l_k(G(\theta_k^*,r),0),
\end{eqnarray}
where
$\hat{\theta}_k(t,t_{k-1},r)=\dfrac{\partial\bar{\theta}}{\partial\varepsilon}(t,t_{k-1},\theta_{k-1}^+,h_{k-1}^+,0)$
and
$\hat{h}_k(t,t_{k-1},r)=\dfrac{\partial\bar{h}}{\partial\varepsilon}(t,t_{k-1},\theta_{k-1}^+,h_{k-1}^+,0)$.

Similarly, we have
\begin{eqnarray}
\nonumber\dot{\hat{\theta}}_k&=&\Omega'(r)\hat{h}_k+\alpha(\Omega(r)(t-\bar{t}_0),r)g(t,G(\Omega(r)(t-\bar{t}_0),r),0),\\
\dot{\hat{h}}_k&=&DH(G(\Omega(r)(t-\bar{t}_0),r))g(t,G(\Omega(r)(t-\bar{t}_0),r),0),
\end{eqnarray}
which gives
\begin{eqnarray}
\hat{h}_k(t,t_{k-1},r)=\int_{t_{k-1}}^tDH(q(s-\bar{t}_0),r)g(s,q(s-\bar{t}_0,r),0)ds
\end{eqnarray}
for $t\in(t_{k-1},t_k]$.

Moreover, for any $t\in(t_n,t_{n+1}](n\geq1)$ we know that the solution $(\theta(t,\bar{t}_0,r,\varepsilon),h(t,\bar{t}_0,r,\varepsilon))$ equals to the solution
$(\bar\theta(t,t_n,\theta_n^+,h_n^+,\varepsilon),\bar{h}(t,t_n,\theta_n^+,h_n^+,\varepsilon))$ of system $(19)$.
Therefore, for $t\in(t_n,t_{n+1}](n\geq1)$ we have
\begin{eqnarray}
\theta(t,\bar{t}_0,r,\varepsilon)=\theta_n^++\Omega(r)(t-t_n)+\varepsilon\hat{\theta}_{n+1}(t,t_n,r)+O(\varepsilon^2)
\end{eqnarray}
and
\begin{eqnarray}
h(t,\bar{t}_0,r,\varepsilon)=h_{n}^{+}+\varepsilon{\hat{h}_{n+1}}(t,t_n,r)+O(\varepsilon^2).
\end{eqnarray}

Substituting $(35)-(39)$ into $(40)-(41)$, we can easily derive $(31)$.
\end{proof}

Therefore, we have obtained the solution
$(\theta(t,\bar{t}_0,r,\varepsilon),h(t,\bar{t}_0,r,\varepsilon))(t\geq\bar{t}_0)$
of the impulsive differential equation $(13)$ satisfying the
initial value $(\theta_0,h_0)=(0,r)$ at $t=\bar{t}_0$ with
$t_0<\bar{t}_0\leq{t_1}$. It is easy to see that the solution is
unique for $t\geq\bar{t}_0$. Note that the function
$x+\varepsilon{l_k(x,\varepsilon)}$ is monotonically increasing with
respect to $x$ for any $k\in\mathbb{Z}$ for $\varepsilon$ small. Hence, the solution can be
well-defined for $t<\bar{t}_0$. Thus for any initial value
$(\theta_0,h_0)=(0,r)$ at $t=\bar{t}_0$ with
$t_0<\bar{t}_0\leq{t_1}$, we can obtain a unique solution
$(\theta(t,\bar{t}_0,r,\varepsilon),h(t,\bar{t}_0,r,\varepsilon))\;(t\in\mathbb{R})$
of system $(13)$. When we discuss the existence of $T-$periodic(or
$mT-$periodic) solutions of the system $(13)$, we only need to
consider the periodicity of the solution
$(\theta(t,\bar{t}_0,r,\varepsilon),h(t,\bar{t}_0,r,\varepsilon))$
for $t\geq\bar{t}_0$.

Moreover, if $\bar{t}_0\in(t_k,t_{k+1}]$ with some
$k\in\mathbb{Z}/\{0\}$, then we can similarly obtain the solution of
system $(13)$ with initial value $(\theta_0,h_0)=(0,r)$ at
$t=\bar{t}_0$.

\section{Main Results}

In this section, using the lemmas obtained in section 2, we study the existence of harmonic or subharmonic solutions of system $(13)$.

Set
\begin{eqnarray*}
\Sigma_{\bar{t}_0}=\Big\{(t,\theta,h):t=\bar{t}_0(mod\;T),0\leq\theta\leq2\pi,h\in{V}\Big\}.
\end{eqnarray*}
Introduce the Poincare map of the system $(13)$ as follows
$$\begin{array}{l}
P_{\varepsilon,\bar{t}_0}:\Sigma_{\bar{t}_0}\rightarrow\Sigma_{\bar{t}_0},\\
P_{\varepsilon,\bar{t}_0}(\theta_0,h_0)=(\theta^*(\bar{t}_0+T,\bar{t}_0,\theta_0,h_0),h^*(\bar{t}_0+T,\bar{t}_0,\theta_0,h_0)),
\end{array}$$
where
$(\theta^*(t,\bar{t}_0,\theta_0,h_0),h^*(t,\bar{t}_0,\theta_0,h_0))$
is the solution of impulsive system $(13)$ with initial value
$(\theta_0,h_0)$ at $t=\bar{t}_0$. Denote by
$P_{\varepsilon,\bar{t}_0}^m$ the $m$th iteration of
$P_{\varepsilon,\bar{t}_0}$.

Taking $(\theta_0,h_0)=(0,r)$ we have
$\theta^*(t,\bar{t}_0,0,r)=\theta(t,\bar{t}_0,r,\varepsilon)$,
$h^*(t,\bar{t}_0,0,r)=h(t,\bar{t}_0,r,\varepsilon)$. As we know,
$mT=msT_1=ms(t_{k+q}-t_k)$ for any integer $k$. So we have
$mT=t_{msq}-t_0$ and $\bar{t}_0+mT\in(t_{msq},t_{msq+1}]$ for
$\bar{t}_0\in(t_0,t_1]$. Applying
$\theta_k^*=\Omega(r)(t_k-\bar{t}_0)$ for any integer $k$, by lemma $2.3$ we have
\begin{eqnarray}
\theta(\bar{t}_0+mT,\bar{t}_0,r,\varepsilon)=\Omega(r)mT+\varepsilon{N(\bar{t}_0,r)}+O(\varepsilon^2)
\end{eqnarray}
with
\begin{eqnarray}
\nonumber{N}(\bar{t}_0,r)&=&\hat{\theta}_1(t_1,\bar{t}_0,r)+\sum_{k=2}^{msq}\hat{\theta}_k(t_k,t_{k-1},r)+\hat{\theta}_{npq+1}(\bar{t}_0+pT,t_{npq},r)\\
&&+\sum_{k=1}^{msq}\alpha\Big(\Omega(r)(t_k-\bar{t}_0),r\Big)l_k\Big(q(t_k-\bar{t}_0,r),0\Big)
\end{eqnarray}
and
\begin{eqnarray}
h(\bar{t}_0+mT,\bar{t}_0,r,\varepsilon)=r+\varepsilon{M(\bar{t}_0,r)}+O(\varepsilon^2)
\end{eqnarray}
with
\begin{eqnarray}
\nonumber{M}(\bar{t}_0,r)&=&\int_{\bar{t}_0}^{\bar{t}_0+mT}DH(q(s-\bar{t}_0,r))g(s,q(s-\bar{t}_0,r),0)ds\\
\nonumber&&+\sum_{k=1}^{msq}DH(G(\theta_k^*,r))l_k(G(\theta_k^*,r),0)\\
\nonumber&=&\int_0^{mT}DH(q(s,r))g(s-\bar{t}_0,q(s,r),0)ds\\
&&+\sum_{k=1}^{msq}DH(q(t_k-\bar{t}_0,r))l_k(q(t_k-\bar{t}_0,r),0).
\end{eqnarray}

Similarly, for $t\in(t_{j-1},t_j]$ with an integer $j\in\mathbb{Z}$, we have
\begin{eqnarray}
\nonumber{M}(\bar{t}_0,r)&=&\int_0^{mT}DH(q(s,r))g(s-\bar{t}_0,q(s,r),0)ds\\
&&+\sum_{k=j}^{msq+j}DH(q(t_k-\bar{t}_0,r))l_k(q(t_k-\bar{t}_0,r),0).
\end{eqnarray}

Further, we have the following lemma.

\begin{lem} Suppose that the assumptions $(A_1)-(A_3)$ and the conditions
$(C_1)-(C_4)$ are satisfied. Then the function $M(\bar{t}_0,h_0)$ is periodic of period $T$ in
$\bar{t}_0$.
\end{lem}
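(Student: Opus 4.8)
The plan is to prove the stronger fact that $M(\bar t_0,r)$, in the form displayed in $(46)$, is $T$-periodic in $\bar t_0$ for every value of the second variable $r$ (this is the $h_0$ appearing in the lemma), by checking $M(\bar t_0+T,r)=M(\bar t_0,r)$ term by term. The argument rests on three consequences of $(C_1)$--$(C_4)$ and of $T=pT_1=sT_2$: (i) $g$ is $T_1$-periodic in its first argument and $T=pT_1$, so $g(t+T,\cdot,\cdot)\equiv g(t,\cdot,\cdot)$; (ii) $t_{k+q}-t_k=T_2$ for all $k$, hence, iterating $s$ times, $t_{k+sq}=t_k+T$, equivalently $t_k-T=t_{k-sq}$; (iii) $l_{k+q}=l_k$ for all $k$, hence $l_{k+sq}=l_k$, since $sq$ is a multiple of $q$. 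In brief, the translation $t\mapsto t+T$ leaves $g$ invariant and permutes the impulse data by the reindexing $k\mapsto k-sq$.

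First I would treat the integral term. In $(46)$ it is $\int_0^{mT}DH(q(s,r))\,g(s-\bar t_0,q(s,r),0)\,ds$, which depends on $\bar t_0$ only through the time argument $s-\bar t_0$ of $g$. Replacing $\bar t_0$ by $\bar t_0+T$ turns it into $\int_0^{mT}DH(q(s,r))\,g(s-\bar t_0-T,q(s,r),0)\,ds$, and by (i) the integrand is unchanged. Note that no periodicity of $q$ in $s$ is used here, which is why the statement holds for all $r$ and not only for $r=h_0$.

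Next I would treat the impulsive sum. For $\bar t_0\in(t_{j-1},t_j]$ the sum in $(46)$ runs over exactly those impulse indices $k$ with $t_k\in(\bar t_0,\bar t_0+mT]$, with summand $\phi_k(\bar t_0):=DH\bigl(q(t_k-\bar t_0,r)\bigr)\,l_k\bigl(q(t_k-\bar t_0,r),0\bigr)$. Since $\bar t_0\in(t_{j-1},t_j]$ gives $\bar t_0+T\in(t_{j-1+sq},t_{j+sq}]$ by (ii), the sum occurring in $M(\bar t_0+T,r)$ runs over the indices $k$ with $t_k\in(\bar t_0+T,\bar t_0+T+mT]$ and has summand $\phi_k(\bar t_0+T)$. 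Using $t_k-T=t_{k-sq}$ from (ii) and $l_k=l_{k-sq}$ from (iii), one gets $\phi_k(\bar t_0+T)=DH\bigl(q(t_{k-sq}-\bar t_0,r)\bigr)\,l_{k-sq}\bigl(q(t_{k-sq}-\bar t_0,r),0\bigr)=\phi_{k-sq}(\bar t_0)$; and the condition $t_k\in(\bar t_0+T,\bar t_0+T+mT]$ is equivalent to $t_{k-sq}\in(\bar t_0,\bar t_0+mT]$, so the substitution $k'=k-sq$ carries the index set of the second sum bijectively onto that of the first. Hence the two impulsive sums coincide.

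Adding the two parts gives $M(\bar t_0+T,r)=M(\bar t_0,r)$, which is the claim. The argument is essentially bookkeeping; the only point demanding care is the reindexing of the impulsive sum, namely that translating $\bar t_0$ by $T$ shifts the window of summation indices by exactly $sq$, keeps the number of impulse points in a window of length $mT$ equal to $msq$, and transforms the summands as $\phi_k(\bar t_0+T)=\phi_{k-sq}(\bar t_0)$. These all follow from (ii)--(iii) and the strict monotonicity and unboundedness of $(t_k)$ in $(C_1)$; the boundary positions where $\bar t_0$ or $\bar t_0+mT$ meets some $t_k$ are governed by the left-continuity convention for solutions of $(3)$ and do not affect the value of $M$.
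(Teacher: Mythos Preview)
Your proof is correct and follows essentially the same route as the paper: treat the integral and the impulsive sum separately, using the $T$-periodicity of $g$ for the former and the reindexing $k\mapsto k-sq$ (via $t_{k+sq}=t_k+T$ and $l_{k+sq}=l_k$) for the latter. Your treatment of the integral is in fact more self-contained than the paper's, which simply cites [8] for that part, while the impulsive-sum argument matches the paper's exactly.
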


\begin{proof} By [8] we know that the function
\begin{eqnarray*}
\int_0^{mT}DH(q(s,h_0))g(s-\bar{t}_0,q(s,h_0),0)ds
\end{eqnarray*}
is periodic and both $T(h_0)$ and $T(h)$ are its periods. Therefore,
we need only to prove that the function
\begin{eqnarray*}
\Delta(\bar{t}_0)\triangleq\sum_{k=j}^{msq+j}DH(q(t_k-\bar{t}_0,r))l_k(q(t_k-\bar{t}_0,r),0),\;t_{j-1}<\bar{t}_0\leq{t}_j
\end{eqnarray*}
is periodic and has $T$ as its period.

For any $\bar{t}_0\in(t_{j-1},t_j]$ with $j\in\mathbb{Z}$, by
$T=sT_1=s(t_{k+q}-t_k)$ for any integer $k$, we know that
$\bar{t}_0+T\in(t_{sq+j-1},t_{sq+j}]$ and
\begin{eqnarray}
\nonumber\Delta(\bar{t}_0+T)&=&\sum_{k=sq+j}^{(m+1)sq+j}DH(q(t_k-\bar{t}_0-T,h_0))l_k(q(t_k-\bar{t}_0-T,h_0),0)\\
\nonumber&=&\sum_{k=j}^{msq+j}DH(q(t_k-\bar{t}_0,h_0))l_k(q(t_k-\bar{t}_0,h_0),0)\\
&=&\Delta(\bar{t}_0).
\end{eqnarray}
That is, the function $\Delta$ is periodic and $T$ is its period.
\end{proof}

As the result, from $(42)-(46)$ we have
\begin{eqnarray}
\nonumber(P_{\varepsilon,\,\bar{t}_0}^m-Id)(0,r)&=&\Big(\theta(\bar{t}_0+mT,\bar{t}_0,r,\varepsilon),h(\theta(\bar{t}_0+mT,\bar{t}_0,r,\varepsilon))-r\Big)\\
&=&(\Omega(r)mT,0)+\varepsilon(N(\bar{t}_0,r),M(\bar{t}_0,r))+O(\varepsilon^2).
\end{eqnarray}

Recall that $\displaystyle\frac{T(h_0)}T=\displaystyle\frac{m}K$ and
$\Omega(h_0)=2\pi/T(h_0)$. It follows
$$\Omega(h_0)mT=0(mod\;2\pi),\;\;mT=0(mod\;T).$$
Hence, from $(48)$, for small $\varepsilon\neq0$ and $|r-h_0|$
small $(0,r)$ is a fixed point of $P_{\varepsilon,\bar{t}_0}^m$ if
and only if
\begin{eqnarray}
\begin{array}{ll}
{F}_1(\bar{t}_0,r,\varepsilon)=\Omega'(h_0)(r-h_0)+O(|r-h_0|^2)+\varepsilon{N}(\bar{t}_0,r)+O(\varepsilon^2)=0,\\
F_2(\bar{t}_0,r,\varepsilon)=M(\bar{t}_0,r)+O(\varepsilon)=0.
\end{array}
\end{eqnarray}

By [13], we know that a solution $x(t,\bar{t}_0,r,\varepsilon)$ of
system $(3)$ is an $mT-$periodic solution if and only if $(0,r)$
is a fixed point of $P_{\varepsilon,\bar{t}_0}^m$. Therefore, we can
now prove the following theorem.

\vspace{0.2cm} \begin{thm} Suppose that the assumptions $(A_1)-(A_3)$ and the conditions
$(C_1)-(C_4)$ are satisfied. Then

 $(1)$ For small $\varepsilon\neq0$ a necessary condition for the
 periodic orbit $L_{h_0}$ to generate a subharmonic solution of
 order $m$ of the system $(3)$ is that there exists $t_0^*\in[0,T]$
 such that
 \begin{eqnarray}
 M(t_0^*,h_0)=0.
 \end{eqnarray}

 $(2)$ Suppose that $(50)$ is satisfied. Let further
 \begin{eqnarray}
 \Omega'(h_0)\neq0,\;\;\displaystyle\frac{\partial{M}(t_0^*,h_0)}{\partial{r}}\neq0.
 \end{eqnarray}
 Then, for any small $\varepsilon\neq0$ the system $(3)$ has a subharmonic
 (or harmonic) solution $x_\varepsilon(t)$ of order $m>1$ (or $m=1$) with the property
 \begin{eqnarray}
 \lim_{\varepsilon\rightarrow0}x_\varepsilon(t)=q(t-t_0^*,h_0).
\end{eqnarray}
\end{thm}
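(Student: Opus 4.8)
Both assertions are statements about the two scalar equations $F_1=0$, $F_2=0$ of $(49)$: as explained just before the theorem, for $\varepsilon\neq0$ small and $|r-h_0|$ small the point $(0,r)$ is a fixed point of $P_{\varepsilon,\bar{t}_0}^m$ — equivalently, the solution $x(t,\bar{t}_0,r,\varepsilon)$ of $(3)$ is $mT$-periodic — if and only if $F_1(\bar{t}_0,r,\varepsilon)=F_2(\bar{t}_0,r,\varepsilon)=0$. Hence an $mT$-periodic solution of $(3)$ bifurcating from the orbit $L_{h_0}$ is exactly a branch $(\bar{t}_0(\varepsilon),r(\varepsilon))$ of solutions of $(49)$ with $r(\varepsilon)\to h_0$ as $\varepsilon\to0$; and since $M$ is $T$-periodic in its first slot by Lemma 3.1 (and the whole Poincar\'e construction is $T$-periodic in $\bar{t}_0$), I may normalize $\bar{t}_0(\varepsilon)\in[0,T]$.

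\emph{Part (1).} Suppose $L_{h_0}$ does generate a subharmonic solution of order $m$. Then there exist $\varepsilon_n\to0$ with $\varepsilon_n\neq0$ and points $(\bar{t}_{0,n},r_n)\in[0,T]\times V$ with $r_n\to h_0$ solving $(49)$; in particular the second equation gives $M(\bar{t}_{0,n},r_n)+O(\varepsilon_n)=0$. Passing to a subsequence along which $\bar{t}_{0,n}\to t_0^*\in[0,T]$ and letting $n\to\infty$, continuity of $M$ forces $M(t_0^*,h_0)=0$, which is $(50)$.

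\emph{Part (2).} Assume $(50)$ and $(51)$. The plan is a two-step implicit-function reduction. First, since $F_1(\bar{t}_0,h_0,0)\equiv0$ while $\frac{\partial F_1}{\partial r}(\bar{t}_0,h_0,0)=\Omega'(h_0)\neq0$ uniformly in $\bar{t}_0$, the implicit function theorem yields a $C^1$ function $r=r^*(\bar{t}_0,\varepsilon)$, defined for all $\bar{t}_0$ and $|\varepsilon|$ small, with $r^*(\bar{t}_0,0)=h_0$, $r^*(\bar{t}_0,\varepsilon)=h_0+O(\varepsilon)$, and $F_1(\bar{t}_0,r^*(\bar{t}_0,\varepsilon),\varepsilon)\equiv0$. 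Substituting this into $F_2$ gives the reduced bifurcation equation $\Phi(\bar{t}_0,\varepsilon):=F_2(\bar{t}_0,r^*(\bar{t}_0,\varepsilon),\varepsilon)=M(\bar{t}_0,h_0)+O(\varepsilon)$, with $\Phi(t_0^*,0)=M(t_0^*,h_0)=0$. With the non-degeneracy conditions $(51)$ in hand, the implicit function theorem applies once more to $\Phi$ and produces $\bar{t}_0=\bar{t}_0(\varepsilon)\to t_0^*$ with $\Phi(\bar{t}_0(\varepsilon),\varepsilon)=0$ for all small $\varepsilon$ of either sign. Then $(0,r(\varepsilon))$, with $r(\varepsilon):=r^*(\bar{t}_0(\varepsilon),\varepsilon)\to h_0$, is a fixed point of $P_{\varepsilon,\bar{t}_0(\varepsilon)}^m$, hence determines an $mT$-periodic solution $x_\varepsilon$ of $(3)$, harmonic when $m=1$ and subharmonic of order $m$ when $m>1$ by the definition given in the introduction. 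For the limit $(52)$, as $\varepsilon\to0$ the solution of $(13)$ issuing from $(0,r(\varepsilon))$ at $t=\bar{t}_0(\varepsilon)$ converges, by continuous dependence of the impulsive flow on data and parameters (the jump instants $t_k$ are fixed and the jumps $\varepsilon l_k$ tend to $0$), to the unperturbed solution $\theta=\Omega(h_0)(t-t_0^*)$, $h\equiv h_0$; transporting this back through $x=G(\theta,h)$ and using $(4)$ yields $x_\varepsilon(t)\to G(\Omega(h_0)(t-t_0^*),h_0)=q(t-t_0^*,h_0)$.

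The main obstacle is the degeneracy of the unperturbed Poincar\'e map: at $\varepsilon=0$ the entire line $\{r=h_0\}$ solves $F_1=0$ (the resonant periodic orbit survives only to this order), so the implicit function theorem cannot be applied directly to the pair $(F_1,F_2)$ — the variable $r$ must be eliminated first, and the genuine content of Part (2) is that the $O(\varepsilon)$ correction picked up by $F_2$ after this elimination does not destroy the zero of $M(\cdot,h_0)$ at $t_0^*$; this is exactly what $(51)$ provides. The remaining points are routine: one checks that the $O(\varepsilon^2)$ remainders in Lemma 2.3 and in $(48)$ depend in a $C^1$ way on $(\bar{t}_0,r,\varepsilon)$ — which follows from the standing smoothness assumptions on $f$, $g$, $l_k$ and $H$ — and invokes the standard continuous-dependence theory for impulsive differential equations, as in [13].
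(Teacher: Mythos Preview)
Your approach in Part~(2) differs from the paper's: the paper applies the implicit function theorem \emph{directly} to the pair $(F_1,F_2)$ in the variables $(\bar t_0,r)$, computing
\[
\frac{\partial(F_1,F_2)}{\partial(\bar t_0,r)}\bigg|_{(t_0^*,h_0,0)}
=\begin{pmatrix}0 & \Omega'(h_0)\\ \partial M/\partial\bar t_0 & \partial M/\partial r\end{pmatrix}_{(t_0^*,h_0)},
\]
whose determinant is $-\Omega'(h_0)\,\dfrac{\partial M}{\partial\bar t_0}(t_0^*,h_0)$. Your claim in the final paragraph that the implicit function theorem ``cannot be applied directly to the pair $(F_1,F_2)$'' is therefore mistaken: the nonzero $(1,2)$-entry $\Omega'(h_0)$ rescues the Jacobian even though the $(1,1)$-entry vanishes. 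Your two-step Lyapunov--Schmidt reduction is a legitimate alternative, but it is not forced.

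More importantly, your second IFT step is not justified by the hypothesis you invoke. To solve $\Phi(\bar t_0,\varepsilon)=M(\bar t_0,h_0)+O(\varepsilon)=0$ for $\bar t_0$ near $t_0^*$ you need $\dfrac{\partial\Phi}{\partial\bar t_0}(t_0^*,0)=\dfrac{\partial M}{\partial\bar t_0}(t_0^*,h_0)\neq0$; condition~(51) as printed gives $\partial M/\partial r\neq0$, which is the $(2,2)$-entry of the Jacobian above and contributes nothing to its determinant. The paper's own Jacobian computation makes plain that the operative hypothesis is $\partial M/\partial\bar t_0(t_0^*,h_0)\neq0$ --- the standard simple-zero condition of Melnikov theory --- so the second inequality in~(51) is evidently a misprint. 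Your write-up inherits this mismatch without comment: the sentence ``with the non-degeneracy conditions~(51) in hand, the implicit function theorem applies once more'' is false as written, and you should at least flag that~(51) must be read with $\partial M/\partial\bar t_0$ in place of $\partial M/\partial r$ for either your argument or the paper's to go through.
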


\vspace{0.2cm}
 \begin{proof} The conclusion $(1)$ follows directly from the
 second equation of Eq. $(49)$.

 $(2)$ By $(49)$ we have
 $$F_1(t_0^*,h_0,0)=F_2(t_0^*,h_0,0)=0$$
 and
 $$\displaystyle\frac{\partial(F_1,F_2)}{\partial(t_0,r)}\Big|_{(t_0,r,\varepsilon)=(t_0^*,h_0,0)}=
 \Bigg(\begin{array}{cc}
   0 & \Omega'(r) \\
   \frac{\partial{M}}{\partial{t_0}} & \frac{\partial{M}}{\partial{r}}
 \end{array}\Bigg)_{(t_0^*,h_0)}.
 $$
Inequalities $(51)$ imply that the determinant of the Jacobian
 of $(F_1,F_2)$ with respect to $(t_0,r)$ at $(t_0^*,h_0,0)$ is not
 zero. Hence, from the implicit function theorem we know that there
 exist neighborhoods $U_0$ of $(t_0^*,h_0)$ and $V_0$ of
 $\varepsilon=0$ such that for each $\varepsilon\in{V_0}$ $(49)$
 has a unique solution
 $$(t_0,r)=(t_0(\varepsilon),r(\varepsilon))=(t_0^*,h_0)+O(\varepsilon).$$
 Substituting the above into $(37)$ and $(39)$, we see that system $(13)$
 has a subharmonic solution of order $m$ of the form
 \begin{eqnarray*}
 \theta_\varepsilon(t)&\equiv&\theta(t,t_0(\varepsilon),r(\varepsilon))=\Omega(h_0)(t-t_0^*)+O(\varepsilon),\\
h_\varepsilon(t)&\equiv&h(t,t_0(\varepsilon),r(\varepsilon))=h_0+O(\varepsilon).
\end{eqnarray*}
Then inserting the above into $(12)$ and using $(4)$, we
find that system $(3)$ has a subharmonic solution of order $m$ of the
form
\begin{eqnarray*}
x_\varepsilon(t)\equiv{G}(\theta_\varepsilon(t),h_\varepsilon(t))=G(\Omega(h_0)(t-t_0^*),h_0)+O(\varepsilon)
=q(t-t_0^*,h_0)+O(\varepsilon),
\end{eqnarray*}
 which yields $(52)$.
\end{proof}

Now suppose the period $T(h)=T_0$ is a constant. Then
$\Omega(h)\equiv2\pi/T_0$ and $\Omega'(h)=0$. Therefore, the above
theorem is not valid in this case. Note that the first equation of
$(38)$ becomes
\begin{eqnarray}
\dot{\hat{\theta}}_k=\alpha\Big(\frac{2\pi}{T_0}(t-\bar{t}_0),r\Big)g(t,q(t-\bar{t}_0,r),0),
\end{eqnarray}
which gives
\begin{eqnarray}
\hat{\theta}_k(t,t_{k-1},r)=\int_{t_{k-1}}^t\alpha\Big(\frac{2\pi}{T_0}(s-\bar{t}_0),r\Big)g(s,q(s-\bar{t}_0,r),0)ds
\end{eqnarray}
for $t\in(t_{k-1},t_k]$. Then, by $(43)$ we have
\begin{eqnarray}
\nonumber{N}(\bar{t}_0,r)&=&\int_{\bar{t}_0}^{\bar{t}_0+mT}\alpha\Big(\frac{2\pi}{T_0}(s-\bar{t}_0),r\Big)g(s,q(s-\bar{t}_0,r),0)ds\\
\nonumber&&+\sum_{k=1}^{msq}\alpha\Big(\frac{2\pi}{T_0}(t_k-\bar{t}_0),r\Big)l_k(q(t_k-\bar{t}_0,r),0)\\
\nonumber&=&\int_0^{mT}\alpha\Big(\frac{2\pi}{T_0}s,r\Big)g(s,q(s,r),0)ds\\
&&+\sum_{k=1}^{msq}\alpha\Big(\frac{2\pi}{T_0}(t_k-\bar{t}_0),r\Big)l_k(q(t_k-\bar{t}_0,r),0).
\end{eqnarray}
Similarly, for $\bar{t}_0\in(t_{j-1},t_j]$ with $j\in\mathbb{Z}$ we
have
\begin{eqnarray}
\nonumber{N}(\bar{t}_0,r)&=&\int_0^{mT}\alpha\Big(\frac{2\pi}{T_0}s,r\Big)g(s,q(s,r),0)ds\\
&&+\sum_{k=j}^{msq+j}\alpha\Big(\frac{2\pi}{T_0}(t_k-\bar{t}_0),r\Big)l_k(q(t_k-\bar{t}_0,r),0).
\end{eqnarray}
By the above equation, we can also prove that the function
$N(\bar{t}_0,h_0)$ has the same property as $M(\bar{t}_0,r)$. Using
$(49)$ we can prove the following theorem similarly following
[8].

\begin{thm} Suppose that for any $h\in{V}$,
\begin{eqnarray*}
T(h)=T_0,\;\displaystyle\frac{T_0}T=\displaystyle\frac{m}K.
\end{eqnarray*}

$(1)$ For small $\varepsilon\neq0$ a necessary condition for system
$(3)$ to have a subharmonic solution of order $m$ of is that there
exist $t_0^*\in[0,T_0]$ and $h_0\in{V}$ such that
 \begin{eqnarray}
 M(t_0^*,h_0)=0,\;\;N(t_0^*,h_0)=0.
 \end{eqnarray}

 $(2)$ Suppose that $(57)$ is satisfied. Let the $2\times2$
 determinant
 \begin{eqnarray}
 J=\Bigg|
             \begin{array}{cc}
               \frac{\partial{M}}{\partial{\bar{t}_0}} & \frac{\partial{M}}{\partial{\bar{t}_0}} \\
               \frac{\partial{N}}{\partial{\bar{t}_0}} & \frac{\partial{N}}{\partial{r}} \\
             \end{array}
 \Bigg|_{(t_0^*,h_0)}\neq0.
 \end{eqnarray}
 Then, for any small $\varepsilon\neq0$ the system $(3)$ has a subharmonic
 (or harmonic) solution $x_\varepsilon(t)$ of order $m>1$ (or $m=1$) with the property
 \begin{eqnarray*}
 \lim_{\varepsilon\rightarrow0}x_\varepsilon(t)=q(t-t_0^*,h_0).
\end{eqnarray*}
\end{thm}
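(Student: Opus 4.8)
The plan is to run the argument of Theorem~3.1 again, the only structural change being that in the isochronous case the $\theta$-equation enters the bifurcation system at the same order $\varepsilon$ as the $h$-equation, so both $M$ and $N$ become obstructions. First I would record the arithmetic identity that produces this: since $T(h)\equiv T_0$ and $T_0/T=m/K$, we have $mT=KT_0$, hence
\begin{eqnarray*}
\Omega(h)\,mT=\frac{2\pi}{T_0}\,KT_0=2\pi K\equiv 0\ (\mathrm{mod}\ 2\pi)\qquad\text{for every }h\in V,
\end{eqnarray*}
not merely at $h=h_0$ with a linear correction as in Theorem~3.1. Substituting this into the expansion $(48)$ of the Poincar\'e map, the $\theta$-component of $(P^m_{\varepsilon,\bar t_0}-Id)(0,r)$ equals $\varepsilon N(\bar t_0,r)+O(\varepsilon^2)$ modulo $2\pi$; being $O(\varepsilon)$ it must vanish exactly, and after dividing both components by $\varepsilon$ we obtain: for $\varepsilon\ne0$ small, $(0,r)$ is a fixed point of $P^m_{\varepsilon,\bar t_0}$ if and only if
\begin{eqnarray*}
\widetilde F_1(\bar t_0,r,\varepsilon):=N(\bar t_0,r)+O(\varepsilon)=0,\qquad
F_2(\bar t_0,r,\varepsilon):=M(\bar t_0,r)+O(\varepsilon)=0,
\end{eqnarray*}
where $\widetilde F_1$ and $F_2$ are $C^1$ in $(\bar t_0,r,\varepsilon)$. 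I would also note that $N$, like $M$ in Lemma~3.1, is $T$-periodic in $\bar t_0$ — the proof of Lemma~3.1 applies verbatim to the representation $(55)$--$(56)$ of $N$ — and that since $T(h)\equiv T_0$ the number $T_0$ is itself a period, so $\bar t_0$ may be sought in $[0,T_0]$.

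For part $(1)$: a subharmonic solution of order $m$ of $(3)$ that bifurcates from the annulus $\bigcup_{h\in V}L_h$, i.e. one converging as $\varepsilon\to0$ to $q(t-t_0^*,h_0)$ for some $h_0\in V$, corresponds to a fixed point $(0,r(\varepsilon))$ of $P^m_{\varepsilon,\bar t_0(\varepsilon)}$ with $(\bar t_0(\varepsilon),r(\varepsilon))\to(t_0^*,h_0)$; the two displayed equations hold along this family, and letting $\varepsilon\to0$ and using the continuity of $M$ and $N$ yields $M(t_0^*,h_0)=N(t_0^*,h_0)=0$, which is exactly $(57)$.

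For part $(2)$: by $(57)$ we have $\widetilde F_1(t_0^*,h_0,0)=F_2(t_0^*,h_0,0)=0$, while the Jacobian
\begin{eqnarray*}
\frac{\partial(\widetilde F_1,F_2)}{\partial(\bar t_0,r)}\bigg|_{(t_0^*,h_0,0)}
=\begin{pmatrix}\dfrac{\partial N}{\partial\bar t_0}&\dfrac{\partial N}{\partial r}\\[3mm]
\dfrac{\partial M}{\partial\bar t_0}&\dfrac{\partial M}{\partial r}\end{pmatrix}_{(t_0^*,h_0)}
\end{eqnarray*}
has determinant equal, up to sign, to the quantity $J$ of $(58)$, hence is invertible. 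The implicit function theorem then provides a unique $C^1$ branch $(\bar t_0,r)=(t_0(\varepsilon),r(\varepsilon))=(t_0^*,h_0)+O(\varepsilon)$ of solutions for $|\varepsilon|$ small, i.e. a fixed point of $P^m_{\varepsilon,t_0(\varepsilon)}$, which by $[13]$ is precisely an $mT$-periodic solution of $(3)$. Substituting this branch into the expansion $(31)$ of Lemma~2.3 gives $\theta_\varepsilon(t)=\Omega(h_0)(t-t_0^*)+O(\varepsilon)$ and $h_\varepsilon(t)=h_0+O(\varepsilon)$; passing back through the transformation $(12)$ and using $(4)$ we get
\begin{eqnarray*}
x_\varepsilon(t)=G(\theta_\varepsilon(t),h_\varepsilon(t))=G\big(\Omega(h_0)(t-t_0^*),h_0\big)+O(\varepsilon)=q(t-t_0^*,h_0)+O(\varepsilon),
\end{eqnarray*}
which is the asserted limit, the solution being harmonic when $m=1$ and subharmonic of order $m$ when $m>1$.

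The only genuinely new ingredient — and hence the step I expect to be the crux — is the exact divisibility of $F_1$ by $\varepsilon$. This is precisely where the hypothesis $T(h)\equiv T_0$ is used, in place of $\Omega'(h_0)\ne0$: it forces the $\varepsilon$-independent part $\Omega(r)\,mT-2\pi K$ of the $\theta$-displacement to vanish identically in $r$, so that $N(\bar t_0,r)$, rather than the term $\Omega'(h_0)(r-h_0)$, becomes the leading obstruction in the first bifurcation equation. Once this and the $T_0$-periodicity of $N$ are in hand, the remainder — the Jacobian computation, the application of the implicit function theorem, and the back-substitution into the expansions — is identical to the proof of Theorem~3.1 and to the corresponding argument in $[8]$, and involves only routine verifications.
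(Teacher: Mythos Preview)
Your proposal is correct and follows essentially the same approach as the paper. The paper itself gives almost no proof for this theorem beyond observing that $\Omega'(h)=0$ in the isochronous case, deriving the explicit form $(55)$--$(56)$ of $N$, noting that $N$ shares the periodicity property of $M$, and then saying ``Using $(49)$ we can prove the following theorem similarly following [8]''; your write-up supplies exactly the details this sketch omits --- the identity $\Omega(r)\,mT=2\pi K$ for all $r$, the resulting reduction of $(48)$ to the pair $N+O(\varepsilon)=0$, $M+O(\varepsilon)=0$, and the implicit function theorem step paralleling the proof of Theorem~3.1.
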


\begin{rem} In this two theorems, if there are $k$ isolated values of $(t_0^*,h_0)$
satisfying the conclusions, then there will be $k$ subharmonic (or harmonic)
solutions with the corresponding property.
\end{rem}

\section{Application}

\begin{exam} As a simple application of the above theorem, we consider the system
\begin{eqnarray}
\left\{
\begin{array}{llll}
\dot{x}=y,\\
\dot{y}=-x+\varepsilon{y},\;\;\;\;\;\;\;\;\;\;\;\;\;\;\;\;\;\;\;\;\;\;\;\;\;\;\;\;\;\;\;\;\;\;\;\;\;\;\;\;t\neq{2k\pi}\\
x(2k\pi+)-x(2k\pi-)=\varepsilon\pi{x}^2(2k\pi-),\\
y(2k\pi+)-y(2k\pi-)=\varepsilon\pi{y}^2(2k\pi-),\;\;\;\;k=0,\pm1,\pm2,\cdots.
\end{array}
\right.
\end{eqnarray}

As we know, without impulsive terms there is no periodic solution of system $(59)$ for $\varepsilon\neq0$ small. However, we will prove that
there may be one or more periodic solutions of the system under
impulsive perturbation.

We have that
\begin{eqnarray*}
H(x,y)=\displaystyle\frac12(x^2+y^2),\;\;T=T_1=T_2=2\pi,
\end{eqnarray*}
\begin{eqnarray*}
t_k=2k\pi,\;\;k=0,\pm1,\pm2,\cdots
\end{eqnarray*}
\begin{eqnarray*}
l_k(x,y,\varepsilon)=\pi(x^2,y^2)^\top,\;k\in\mathbb{Z}.
\end{eqnarray*}
For any $h>0$, $L_h=\{(x,y):H(x,y)=h\}$ is a closed orbit with the
period $T(h)=2\pi=T$. This closed orbit can be expressed as
\begin{eqnarray}
(x,y)^\top=q(t,h)=\sqrt{2h}(\cos{t},\sin{t})^\top,\;\;0\leq{t}\leq2\pi.
\end{eqnarray}
Moreover,
\begin{eqnarray*}
G(\theta,h)=\sqrt{2h}(\cos\theta,\sin\theta)^\top,\;\;h>0,
\end{eqnarray*}
\begin{eqnarray*}
\alpha(\theta,h)=\displaystyle\frac1{\sqrt{2h}}(-\sin\theta,\cos\theta),
\end{eqnarray*}
\begin{eqnarray*}
DH(G)=\sqrt{2h}(\cos\theta,\sin\theta),
\end{eqnarray*}
\begin{eqnarray*}
D_{\theta}G=\sqrt{2h}(-\sin\theta,\cos\theta)^\top,\;\;D_hG=\frac1{\sqrt{2h}}(\cos\theta,\sin\theta),
\end{eqnarray*}
\begin{eqnarray*}
g(t,G(\theta,h),\varepsilon)=\sqrt{2h}(0,\sin\theta)^\top,
\end{eqnarray*}
\begin{eqnarray*}
l_k(G(\theta,h),0)=2h\pi(\cos^2\theta,\sin^2\theta)^\top,\;k\in\mathbb{Z}.
\end{eqnarray*}

Then, the periodic transformation
$$(x,y)^\top=G(\theta,h)=\sqrt{2h}(\sin\theta,\cos\theta)^\top$$
transforms $(59)$ into the following impulsive differential
equation
\begin{eqnarray}
\left\{
\begin{array}{llll}
\dot{\theta}=1+\displaystyle\frac12\varepsilon\sin2\theta,\\
\dot{h}=2\varepsilon{h}\sin^2\theta,\;\;\;\;\;\;\;\;\;\;\;\;\;\;\;\;\;\;\;\;\;\;\;\;\;\;\;\;\;\;\;\;\;\;\;\;\;\;\;\;\;\;\;\;\;\;\;\;t\neq{2k\pi}\\
\Delta\theta_k=\sqrt{\displaystyle\frac{h(2k\pi)}2}\pi\varepsilon\sin2\theta(2k\pi)[\sin\theta(2k\pi)-\cos\theta(2k\pi)]+O(\varepsilon^2),\\
\Delta{h}_k=4\pi\varepsilon{h(2k\pi)}\sqrt{2h(2k\pi)}[\sin^3\theta(2k\pi)+\cos^3\theta(2k\pi)]+O(\varepsilon^2),\;\;\;k=0,\pm1,\pm2,\cdots
\end{array}\right.
\end{eqnarray}

By $(46)$ and $(55)$, for $2(k-1)\pi<\bar{t}_0\leq2k\pi$
with any $k\in\mathbb{Z}$ we have
\begin{eqnarray}
\nonumber{M}(\bar{t}_0,r)&=&2\pi{r}[1+2\sqrt{2r}(\cos^3\bar{t}_0+\sin^3\bar{t}_0)],\\
N(\bar{t}_0,r)&=&\pi\sqrt{\frac{r}2}\sin2\bar{t}_0(\sin\bar{t}_0-\cos\bar{t}_0).
\end{eqnarray}
Therefore, solving the equation
\begin{eqnarray}
M(t_0^*,h_0)=N(t_0^*,h_0)=0,\;\;t_0^*\in[0,2\pi],\;\;h_0>0,
\end{eqnarray}
we can obtain three solutions
$(t_0^*,h_0)=(\displaystyle\frac54\pi,\displaystyle\frac14)$,
$(\pi,\displaystyle\frac18)$ or
$(\displaystyle\frac32\pi,\displaystyle\frac18)$. Moreover, the
Jacobi determinant
\begin{eqnarray}
\nonumber{J}&=&\left|\begin{array}{cc}
          \displaystyle\frac{\partial{M}}{\partial{\bar{t}_0}} & \displaystyle\frac{\partial{M}}{\partial{r}} \\
          \displaystyle\frac{\partial{N}}{\partial{\bar{t}_0}} & \displaystyle\frac{\partial{N}}{\partial{r}}
        \end{array}
  \right|_{(t_0^*,h_0)}\\
 \nonumber &=&\left|\begin{array}{cc}
                               2\pi+3\sqrt{2h_0}\pi(\cos^3t_0^*-\sin^3t_0^*) & 6\sqrt{2h_0^3}\sin2t_0^*(\cos{t_0^*}-\sin{t_0^*}) \\
                               0 &
                               \pi\sqrt{2h_0}(\cos{t_0^*}+\sin{t_0^*})(3\sin{t_0^*}\cos{t_0^*}-1)
                             \end{array}
                       \right|\\
  &=&\displaystyle\frac32\pi^2{h_0}(\sin2{t_0^*}+1)(\sin2{t_0^*}-2)(3\sin2{t_0^*}-2)\neq0
\end{eqnarray}
for all the three solutions.

Therefore, by theorem $3.2$, system $(59)$ has three harmonic
solutions $x_\varepsilon(t)$, $y_\varepsilon(t)$ and
$z_\varepsilon(t)$ with the property
\begin{eqnarray*}
\lim_{\varepsilon\rightarrow0}x_\varepsilon(t)=q(t-\frac54\pi,\frac14)=\frac{\sqrt2}2\Big(\cos(t-5\pi/4),\sin(t-5\pi/4)\Big)^\top,
\end{eqnarray*}
\begin{eqnarray*}
\lim_{\varepsilon\rightarrow0}y_\varepsilon(t)=q(t-\pi,\frac18)=\frac12\Big(\cos(t-\pi),\sin(t-\pi)\Big)^\top=-\frac12\Big(\cos{t},\sin{t}\Big)^\top,
\end{eqnarray*}
and
\begin{eqnarray*}
\lim_{\varepsilon\rightarrow0}z_\varepsilon(t)=q(t-\frac32\pi,\frac18)=\frac12\Big(\cos(t-\frac32\pi),\sin(t-\frac32\pi)\Big)^\top=-\frac12\Big(\sin{t},\cos{t}\Big)^\top,
\end{eqnarray*}
respectively.
\end{exam}

\end{document}